\def \phi {\varphi}
\def \RN {\mathbb{R}^N}
\def \G{\Gamma}
\newcommand{\Rn}{\mathbb R^n}
\newcommand{\la}{\lambda}
\numberwithin{equation}{section}
\newcommand{\beq}{\begin{equation}}
\newcommand{\bea}[1]{\begin{array}{#1} }
\newcommand{\eeq}{ \end{equation}}
\newcommand{\ea}{ \end{array}}
\newcommand{\In}{\mathbf 1_E}
\newcommand{\Lp}{L^p}
\newtheorem{theorem}{Theorem}[section]
\newtheorem{proposition}[theorem]{Proposition}
\numberwithin{equation}{section}
\begin{document}

\title[On the best constant, etc.]{On the best constant in the nonlocal isoperimetric inequality of Almgren and Lieb}

\subjclass[2010]{}
\keywords{Fractional perimeter, isoperimetric inequality, optimal constant}

\date{}

\begin{abstract}
In 1989 Almgren and Lieb proved a rearrangement inequality for the Sobolev spaces of fractional order $W^{s,p}$. The case $p = 2$ of their result implies the nonlocal isoperimetric inequality 
\[
\frac{P_s(E)}{|E|^{\frac{N-2s}N}} \ge \frac{P_s(B_1)}{|B_1|^{\frac{N-2s}N}},\ \ \ \ \ \ \ 0<s<1/2,
\]
where $P_s$ indicates the fractional $s$-perimeter, and $B_1$ is the unit ball in $\RN$.
In this note we explicitly compute the best constant, and  show that for any $0<s<1/2$, one has
\[
\frac{P_s(B_1)}{|B_1|^{\frac{N-2s}N}} = \frac{N \pi^{\frac N2 + s}  \G(1-2s)}{s \G(\frac N2+1)^{\frac{2s}N} \G(1-s)\G(\frac{N+2-2s}{2})}.
\] 
\end{abstract}

%\begin{abstract} 
%\end{abstract}
%\maketitle
\author{Nicola Garofalo}

\address{Dipartimento d'Ingegneria Civile e Ambientale (DICEA)\\ Universit\`a di Padova\\ Via Marzolo, 9 - 35131 Padova,  Italy}
\vskip 0.2in
\email{nicola.garofalo@unipd.it}

\thanks{The author was supported in part by a Progetto SID (Investimento Strategico di Dipartimento) ``Non-local operators in geometry and in free boundary problems, and their connection with the applied sciences", University of Padova, 2017.}

\maketitle

%\tableofcontents

\section{A simple proof of the computation of the best constant}\label{S:intro}

In their 1989 paper \cite[Theorem 9.2 (i)]{AL}, Almgren and Lieb proved that, if $f\in W^{s,p}$, for $0<s<1$ and $1\le p<\infty$, then also $f^\star\in W^{s,p}$ and 
\begin{equation}\label{AL}
||f^\star||_{W^{s,p}} \le ||f||_{W^{s,p}},
\end{equation}
where $f^\star$ denotes the non-increasing rearrangement of $|f|$. Here, for $1\le p < \infty$ and $s>0$ we have denoted by $W^{s,p}$ the Banach space of functions $f\in \Lp$ with finite Aronszajn-Gagliardo-Slobedetzky seminorm, 
\begin{equation}\label{ags}
[f]_{p,s} = \left(\int_{\RN} \int_{\RN} \frac{|f(x) - f(y)|^p}{|x-y|^{N+ps}} dx dy\right)^{1/p},
\end{equation}
see e.g. \cite{Ad} or also \cite{DPV} (throughout this note we assume $N\ge 2$). Notice that if $\delta_\la f(x) = f(\la x)$, with $\la>0$, then $[\delta_\la f]^p_{p,s} = \la^{-N+ps}[f]^p_{p,s}$. Consider now the nonlocal perimeter of a set,
 \begin{equation}\label{psE}
P_s(E) = [\mathbf 1_E]^2_{2,s} = [\In]_{1,2s}.
\end{equation} 
This notion has appeared in the works of Bourgain, Brezis and Mironescu \cite{BBM1}, \cite{BBM2}, \cite{B}, of Maz'ya \cite{Ma}, and of Caffarelli, Roquejoffre and Savin \cite{CRS}. These latter authors have begun the study of the Plateau problem with respect to a family of fractional perimeters. By the above noted scaling property, we have  
\begin{equation}\label{scale}
\frac{P_s(\delta_\la E)}{|\delta_\la E|^{(N-2s)/N}} = \frac{P_s(E)}{|E|^{(N-2s)/N}}.
\end{equation}
 This observation suggests that the fractional perimeter should satisfy the following isoperimetric inequality: given $0<s<1/2$, there exists a constant $i(N,s)>0$ such that for any measurable set $E\subset \RN$, such that $|E|<\infty$, one has
\begin{equation}\label{isos}
\frac{P_s(E)}{|E|^{(N-2s)/N}}\ \ge\ i(N,s).
\end{equation}
(we note that when $s\ge 1/2$ any non-empty open set has infinite $s$-perimeter, see e.g. the proof of Proposition \ref{P:AL} below). In fact, it is well-known (see e.g. \cite{FLS}, \cite{FS} and \cite{FFMMM}) that \eqref{isos} is contained in the Almgren-Lieb inequality \eqref{AL}, since the latter, combined with the observation \eqref{scale}, implies
\begin{equation}\label{AL20}
\frac{P_s(E)}{|E|^{\frac{N-2s}N}} \ge \frac{P_s(B_1)}{|B_1|^{\frac{N-2s}N}},
\end{equation}
where $B_1\subset \RN$ is the unit ball. The important case of equality in \eqref{AL20} is contained in the works \cite{FLS} and \cite{FS}. 

In this note we present a simple proof of the following explicit expression of the best constant in the right-hand side of \eqref{AL20}. In connection with our result the reader should see the remarks at the end of this note.

\begin{proposition}\label{P:AL}
For any $0<s<1/2$, one has 
\begin{equation}\label{isos2}
\frac{P_s(B_1)}{|B_1|^{\frac{N-2s}N}} =  \frac{N \pi^{\frac N2 + s}  \G(1-2s)}{s \G(\frac N2+1)^{\frac{2s}N} \G(1-s)\G(\frac{N+2-2s}{2})}.
\end{equation}
\end{proposition}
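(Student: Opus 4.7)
The plan is to compute $P_s(B_1)$ via the Fourier transform, exploiting the fact that $\mathbf{1}_{B_1}$ is radial so that its Fourier transform reduces to a Bessel function, and the final $\xi$-integral reduces to a Weber--Schafheitlin integral.

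\medskip

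\noindent\textbf{Step 1: Plancherel identity for the Gagliardo seminorm.} I would start from the classical identity
\[
[f]^2_{2,s} = \frac{2}{c_{N,s}}\cdot \frac{1}{(2\pi)^N}\int_{\RN}|\xi|^{2s}|\hat f(\xi)|^2\,d\xi,
\]
where $\hat f(\xi)=\int e^{-i\xi\cdot x}f(x)\,dx$ and $c_{N,s}=\frac{2^{2s}s\,\G(\frac{N+2s}{2})}{\pi^{N/2}\G(1-s)}$ is the standard normalizing constant of the fractional Laplacian. This identity follows from symmetrizing $[f]^2_{2,s}$ and recognizing the resulting singular integral as $\frac{2}{c_{N,s}}\langle f,(-\Delta)^s f\rangle$. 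I would take $f=\mathbf 1_{B_1}$; since $\mathbf 1_{B_1}\in L^2\cap L^1$ is compactly supported and we restrict to $0<s<1/2$, the $\xi$-integral converges and the identity is legitimate. (For $s\ge 1/2$ it diverges, explaining the restriction.)

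\medskip

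\noindent\textbf{Step 2: Fourier transform of $\mathbf 1_{B_1}$.} Here I would invoke the classical radial formula
\[
\widehat{\mathbf 1_{B_1}}(\xi)=(2\pi)^{N/2}\,|\xi|^{-N/2}J_{N/2}(|\xi|),
\]
so that $|\widehat{\mathbf 1_{B_1}}(\xi)|^2=(2\pi)^N|\xi|^{-N}J_{N/2}(|\xi|)^2$. Passing to polar coordinates and using $\omega_{N-1}=\frac{2\pi^{N/2}}{\G(N/2)}$, the Fourier integral becomes
\[
\int_{\RN}|\xi|^{2s}|\widehat{\mathbf 1_{B_1}}(\xi)|^2\,d\xi
= (2\pi)^N\,\frac{2\pi^{N/2}}{\G(N/2)}\int_0^\infty r^{2s-1}J_{N/2}(r)^2\,dr.
\]

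\medskip

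\noindent\textbf{Step 3: Weber--Schafheitlin.} The remaining one-dimensional integral is a classical Weber--Schafheitlin integral; for $0<2s<1$,
\[
\int_0^\infty r^{2s-1}J_{N/2}(r)^2\,dr
=\frac{\G(1-2s)\,\G(\frac{N+2s}{2})}{2^{1-2s}\,\G(1-s)^2\,\G(\frac{N+2-2s}{2})}.
\]
The convergence range $0<2s<1$ coincides precisely with Almgren--Lieb's range, and the appearance of $\G(1-2s)$ shows $P_s(B_1)\to\infty$ as $s\uparrow 1/2$, in agreement with the remark in the statement.

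\medskip

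\noindent\textbf{Step 4: Assemble and normalize.} Substituting Steps 2 and 3 into Step 1 and carefully collecting the powers of $2$ and $\pi$ (the exponents telescope to $2^1$ and $\pi^N$ respectively), and using $N\G(N/2)=2\G(\frac N2+1)$, I expect to obtain
\[
P_s(B_1)=\frac{N\pi^N\,\G(1-2s)}{s\,\G(\frac{N}{2}+1)\,\G(1-s)\,\G(\frac{N+2-2s}{2})}.
\]
Then dividing by $|B_1|^{(N-2s)/N}=\pi^{(N-2s)/2}/\G(\frac N2+1)^{(N-2s)/N}$ yields \eqref{isos2}, since the exponents of $\G(\frac N2+1)$ combine as $\frac{2s}{N}+\frac{N-2s}{N}=1$ and the powers of $\pi$ combine to $\pi^{N/2+s}$.

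\medskip

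\noindent\textbf{Main obstacle.} There is no real conceptual difficulty; the real work lies in the bookkeeping of multiplicative constants, which involves four pieces of data in series (the Fourier-convention-dependent constant in the Gagliardo--Plancherel identity, the explicit value of $c_{N,s}$, the Bessel Fourier transform of $\mathbf 1_{B_1}$, and the Weber--Schafheitlin formula), each easy in isolation but easy to misstate; verifying that all powers of $2$ and $\pi$ cancel correctly and that the $\G$-functions arrange into the claimed form is the step requiring the most care.
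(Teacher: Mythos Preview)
Your proposal is correct and follows essentially the same route as the paper's proof: Plancherel for the Gagliardo seminorm, the explicit Fourier transform of $\mathbf 1_{B_1}$ in terms of $J_{N/2}$, and the Weber--Schafheitlin formula for $\int_0^\infty r^{2s-1}J_{N/2}(r)^2\,dr$. The only cosmetic difference is that the paper works with the Fourier convention $\hat f(\xi)=\int e^{-2\pi i\langle\xi,x\rangle}f(x)\,dx$ (so the $(2\pi)^N$ factors are absorbed and the Bessel argument is $2\pi|\xi|$), whereas you use the $e^{-i\xi\cdot x}$ convention; the intermediate constants differ accordingly but the final assembly agrees.
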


It is worth noting that the exact limiting behavior of the isoperimetric quotient $\frac{P_s(B_1)}{|B_1|^{\frac{N-2s}N}}$ at the poles $s = {\frac 12}$, or $s = 0$, is captured by the factor $\frac{\G(1-2s)}{s}$ (recall that $\G(z)$ has a simple pole at $z = 0$ with residue $1$).
Hereafter, we indicate with $\sigma_{N-1} = \frac{2 \pi^{\frac N2}}{\G(N/2)}$ the $(N-1)$-dimensional volume of the unit sphere $\mathbb S^{N-1}\subset \RN$, and with $\omega_N = \sigma_{N-1}/N$ the $N$-dimensional volume of the unit ball.
One has from \eqref{isos2}
\begin{equation}\label{0}
\underset{s\to 0^+}{\lim} s\ \frac{P_s(B_1)}{|B_1|^{\frac{N-2s}N}}  = \frac{2 \pi^{\frac N2}}{\G(\frac N2)} = \sigma_{N-1},\ \ \ \underset{s\to {\frac 12}^-}{\lim} (1 - 2s) P_s(B_1)  = \frac{2N \pi^{\frac N2}}{\G(\frac N2+1)^{\frac{1}N} \G(\frac{N+1}{2})} |B_1|^{\frac{N-1}N}.
\end{equation}
Both limit relations in \eqref{0} are special cases of well-known results. In fact, the case $p=2$ of \cite[Theor. 3]{MS} gives for $f\in W^{s,2}$,
\[
\underset{s\to 0^+}{\lim} s\ \int_{\RN}\int_{\RN} \frac{|f(x) - f(y)|^2}{|x-y|^{N+2s}} dx dy = \sigma_{N-1} ||f||^2_{L^2(\RN)}.
\] 
Taking $f = \mathbf 1_{B_1}$ in such result, we obtain the first relation in \eqref{0}. On the other hand, we recall that, in answer to a question posed in \cite{BBM1}, J. D\'avila in \cite[Theor. 1]{Davila} extended to any dimension their limiting formula for $N=1$, and proved  
\begin{equation}\label{dav}
\underset{s\nearrow 1/2}{\lim}\ (1 -2s) P_s(E) = \left(\int_{\mathbb S^{N-1}} |<e_N,\omega>|d\sigma(\omega)\right)\ P(E),
\end{equation}
where $e_N = (0,...,0,1)$. Since one easily recognises that $
\int_{\mathbb S^{N-1}} |<e_N,\omega>| = \frac{2\pi^{\frac{N-1}2}}{\G(\frac{N+1}2)}$, it is clear that taking $E = B_1$ in \eqref{dav} we obtain the latter relation in \eqref{0}.

\begin{proof}[Proof of Proposition \ref{P:AL}]
Using Plancherel theorem (we adopt the definition of Fourier transform $\hat f(\xi) = \int_{\RN} e^{-2\pi i<\xi,x>} f(x) dx$, which gives $||f||_2 = ||\hat f||_2$), we easily obtain for an arbitrary function $f\in L^2$
\[
 [f]^2_{2,s}  =  2 \int_{\RN} |\hat f(\xi)|^2 \int_{\RN} \frac{1 - \cos(2\pi<h,\xi>)}{|h|^{N+2s}} dh d\xi. 
\]
Now, a simple computation gives
\begin{align*}
& \int_{\RN} \frac{1 - \cos(2\pi<h,\xi>)}{|h|^{N+2s}} dh = (2 \pi |\xi|)^{2s} \int_{\RN} \frac{1 - \cos(h_N)}{|h|^{N+2s}} dh
= \frac{\pi^{\frac N2} \G(1-s)}{s 2^{2s} \G\left(\frac{N+ 2s}{2}\right)} (2 \pi |\xi|)^{2s},
\end{align*}
where in the last equality we have used the well-known identity
\[
\int_{\RN} \frac{1 - \cos(h_N)}{|h|^{N+2s}} dh =  \frac{ \pi^{\frac N2} \G(1-s)}{s 2^{2s} \G\left(\frac{N+ 2s}{2}\right)},
\]
see e.g. \cite[Lemma 3.1]{FLS}. We conclude that the fractional perimeter of the unit ball is given by
\begin{equation}\label{AL2}
P_s(B_1) = [\mathbf 1_{B_1}]_{2,s}^2 =  \frac{2\pi^{\frac N2 + 2s} \G(1-s)}{s \G\left(\frac{N+ 2s}{2}\right)} \int_{\RN} |\xi|^{2s} |\hat{\mathbf 1}_{B_1}(\xi)|^2   d\xi.
\end{equation}
In what follows, we denote by $J_\nu(z)$ the Bessel function of the first kind and order $\nu$. Using Bochner's formula 
$
\hat u(\xi)=2\pi|\xi|^{-\frac{N}2 +1}\int^\infty_0 r^{\frac{N}2} f(r)J_{\frac{N}2-1}
(2\pi|\xi|r) dr
$
for the Fourier transform of a spherically symmetric function $u(x) = f(|x|)$, see  \cite[Theorem 40 p.69]{BC}, in combination with the identity
$\int_0^1 x^{\nu+1} J_\nu(a x) dx = a^{-1} J_{\nu +1}(a),$ $\Re \nu > -1$,
see \cite[6.561, 5., p.683]{GR}, we have 
\begin{equation}\label{classical}
\hat{\mathbf 1}_{B_1}(\xi)  = 2\pi|\xi|^{-\frac{N}2 +1}\int^1_0 r^{\frac{N}2} J_{\frac{N}2-1}
(2\pi|\xi|r) dr = |\xi|^{-\frac N2} J_{\frac N2}(2\pi 
|\xi|).
\end{equation}
Since the asymptotic behaviour of $J_\nu$ is given by
$J_\nu(z)\cong\frac{2^{-\nu}}{\Gamma(\nu+1)}z^\nu$, as $z\to 0$, $J_\nu(z) = O(z^{-1/2})$, as $z\to+\infty$,
we see that $|\xi|^{s} \hat{\mathbf 1}_{B_1}(\xi) \in L^2(\RN)$ if and only if $s<1/2$ (notice that this shows that a ball has infinite $s$-perimeter if $1/2\le s<1$). For $0<s<1/2$ we thus find
\begin{equation}\label{opop}
\int_{\RN} |\xi|^{2s-N} |J_{\frac N2}(2\pi 
|\xi|)|^2 d\xi = \sigma_{N-1} \int_0^\infty r^{-(1-2s)} |J_{\frac N2}(2\pi 
r)|^2 dr.
\end{equation}
The latter integral can be computed explicitly using a special case of the beautiful, classical formula of Weber-Schafheitlin from 1880/1888:  let $\Re(\nu+\mu+1)>\Re \la >0$, $\alpha>0$, then
\begin{equation}\label{beauty}
\int_0^\infty r^{-\la} J_\nu(\alpha r) J_\mu(\alpha r) dr = \frac{\alpha^{\la -1} \G(\la)\G(\frac{\nu+\mu-\la+1}{2})}{2^\la \G(\frac{\mu-\nu+\la+1}{2})\G(\frac{\nu+\mu+\la+1}{2})\G(\frac{\nu-\mu+\la+1}{2})},
\end{equation}
see 6.574, 2. on p. 692 in \cite{GR}, but for a proof see 13.4 on p. 398 in \cite{Wa}, or the original papers of Sonine \cite[pp. 51-52]{So} and Schafheitlin \cite{Sch}. Taking $\nu = \mu = N/2$, $\la = 1-2s>0$, and $\alpha = 2\pi$ in \eqref{beauty}, we thus find
\[
\int_0^\infty r^{-(1-2s)} |J_{\frac N2}(2\pi 
r)|^2 dr = \frac{\G(1-2s)\G(\frac{N+2s}{2})}{2 \pi^{2s} \G(1-s)^2\G(\frac{N+2-2s}{2})}.
\] 
Combining this observation with \eqref{AL2}, \eqref{opop}, we obtain 
\begin{align*}
\frac{P_s(B_1)}{|B_1|^{\frac{N-2s}N}} & = \frac{2 \pi^{\frac N2 + 2s} \G(1-s)}{s \G\left(\frac{N+ 2s}{2}\right)} \omega_N^{\frac{2s-N}N} \sigma_{N-1} \frac{\G(1-2s)\G(\frac{N+2s}{2})}{2 \pi^{2s} \G(1-s)^2\G(\frac{N+2-2s}{2})}
\\
& = \frac{N \pi^{\frac N2 + s}  \G(1-2s)}{s \G(\frac N2+1)^{\frac{2s}N} \G(1-s)\G(\frac{N+2-2s}{2})},
\notag
\end{align*}
which is the desired conclusion \eqref{isos2}.

\end{proof}

In closing, the following two remarks are in order. First, in (4.2) and (1.4) of their 2008 work \cite{FS}, Frank and Seiringer had already shown that  
\begin{equation}\label{ins}
\frac{P_s(B_1)}{|B_1|^{\frac{N-2s}N}} = \frac{N \pi^s}{(N-2s)\G(\frac N2+1)^{\frac{2s}N}}\ C_{N,s,1},
\end{equation}
where (keeping in mind that their $s$ corresponds to our $2s$) they defined
\begin{equation}\label{C}
C_{N,s,1} = 2 \sigma_{N-2} \int_0^1 r^{-(1-2s)} (1 - r^{N-2s}) \int_{-1}^1 \frac{(1-t^2)^{\frac{N-3}2}}{(1-2rt + r^2)^{\frac{N+2s}2}} dt dr.
\end{equation}
The authors provide the explicit values of $C_{N,s,1}$ only for $N=1$ or $3$, but the integral in the right-hand side of \eqref{C} does not seem to be easily computable, in general. We note that our formula \eqref{isos2} does exactly that. 

Secondly, after a preliminary version of this note was completed, R. Frank has kindly informed us that the explicit value in our formula \eqref{isos2} can also be obtained by combining  Proposition 2.3 in the work \cite{FFMMM} with a result in Samko's book \cite{Sam} which is itself cited in \cite{FFMMM}. In a subsequent conversation, A. Figalli has kindly told us that, although an expression of the best constant is not explicitly written in their work, one can extract it from the  following chain of results (which for the reader's sake we have outlined in detail, also keeping in mind that their $s$ corresponds to our $2s$):
\begin{itemize}
\item[1)] The first key step is formula (2.11) in Proposition 2.3 in \cite{FFMMM} which states
\begin{equation}\label{lambda1s}
P_s(B_1) = \frac{\sigma_{N-1}}{2s(N-2s)} \la_1^s.
\end{equation}
Here, $\la_1^s$ indicates the first eigenvalue of the following operator in formula (2.7) in \cite{FFMMM} :
\[
\mathscr J_s u = \frac{2^{1-2s} \pi^{\frac{N-1}2} \G(\frac{1-2s}2)}{(1+2s) \G(\frac{N+2s}2)} \mathscr D^{1+s} u,
\]
where $\mathscr D^{1+s}$ is the hypersingular operator on $\mathbb S^{N-1}$ defined by
\[
\mathscr D^{1+s} u(x) = \frac{2s 2^s}{\pi^{\frac{N-1}2} } \frac{\G(\frac{N+2s}2)}{\G(\frac{1-2s}2)} \operatorname{P.V.} \int_{\mathbb S^{N-1}} \frac{u(x) - u(y)}{|x-y|^{N+2s}} d\sigma(y).
\]
\item[2)] Denoting by $\lambda_1^{\star}(s)$ the first eigenvalue of the operator $\mathscr D^{1+s}$, then by the above definition of $\mathscr J_s $ one has  that 
\[
\lambda_1^s = \frac{2^{1-2s} \pi^{\frac{N-1}2} \G(\frac{1-2s}2)}{(1+2s) \G(\frac{N+2s}2)}\ \lambda_1^{\star}(s).
\]
\item[3)] Finally, $\lambda_1^{\star}(s)$ is contained in Lemma 6.26 in \cite{Sam}. The latter gives (see also (2.4) in \cite{FFMMM})
\[
\lambda_1^{\star}(s) = \frac{\G(\frac{N+2+2s}2)}{\G(\frac{N-2s}2)} - \frac{\G(\frac{N+2s}2)}{\G(\frac{N- 2-2s}2)}. 
\] 
\end{itemize} 

However, it should be noted that the indirect proof outlined in (1)-(3) is not self-contained and relies on several auxiliary results. For instance, the proof of \eqref{lambda1s} above, i.e., (2.11) in \cite[Prop. 2.3]{FFMMM}, uses various special calculations involving the operator $\mathscr J_s$ and, per se, is at least as long as the whole proof of Proposition \ref{P:AL}. More importantly, (1)-(3) involve facts from harmonic analysis on the sphere $\mathbb S^{N-1}$ which our simple proof of \eqref{isos2} avoid altogether. For instance, it rests on Lemma 6.26 from \cite{Sam} which is not self-contained since its proof hinges on the Funk-Hecke formula for spherical harmonics (see \cite[Theor. 1.7]{Sam}), and on the expression, in terms of various special integrals involving Gegenbauer polynomials, of the coefficients in the Fourier-Laplace series of a function on the sphere. 

As a final comment, we note that since in Proposition \ref{P:AL} we explicitly compute $P_s(B_1)$, our result provides an alternative direct computation of the above mentioned number $\la_1^s$ in \eqref{lambda1s}.

%%%%%%%%%%%%%%%%%%%%%%%%%%%%%%%%%%%%%%%

\bibliographystyle{amsplain}

\end{document}